\newlength{\standardunitlength}
\newtheorem{prop}{Proposition}[section]
\newtheorem{lemma}[prop]{Lemma}
\newtheorem{cor}[prop]{Corollary}
\newtheorem{theorem}[prop]{Theorem}
\begin{document}

\title[Fixed points of non-uniform permutations]{Fixed points of non-uniform permutations and representation theory of the symmetric group}

\author{Jason Fulman}
\address{Department of Mathematics, University of Southern California, Los Angeles, CA 90089-2532, USA }
\email{fulman@usc.edu}

\keywords{}

\subjclass[2020]{}

\date{June 27, 2024}

\begin{abstract} We use representation theory of the symmetric group $S_n$ to prove Poisson limit theorems for the
distribution of fixed points for three types of non-uniform permutations. First, we give results for the commutator $g^{-1}x^{-1}gx$
 where $g$ and $x$ are uniform in $S_n$. Second, we give results for the commutator $g^{-1}x^{-1}gx$ where $g$ in uniform in $S_n$ and $x$ is fixed.
Third, we give results for permutations obtained by multiplying $\frac{1}{i}n \log(n) + cn$  many random i-cycles. Some of our results are known by other, quite different, methods. \end{abstract}

\maketitle

\section{Introduction}

A classic result in the theory of the symmetric group $S_n$ is that the distribution of fixed points of a random permutation tends to a Poisson($1$) random
variable as $n \rightarrow \infty$. Here a random variable $X$ is said to have the Poisson($\lambda$) distribution if for all natural numbers $j$, \[ P(X=j) = \frac{\lambda^j}{j! e^{\lambda}}.\]There are dozens of proofs of this; see for instance \cite{AT} \cite{CDM} and the references therein. A more subtle problem is the study of the distribution of fixed points for non-uniform random permutations. All of the measures on permutations studied here are constant on conjugacy classes, which suggests that representation theory could be helpful. We will use the ``method of moments'' \cite{Diac}, so it is useful to know that a Poisson($\lambda$) random variable has rth moment equal to \[ \sum_{a=0}^r S(r,a) \lambda^a, \] where $S(r,a)$ is the Stirling number of the second kind, which is the number of set partitions of a set of size $r$ into $a$ blocks. In particular, the rth moment of a Poisson($1$) distribution is equal to the rth Bell number $B(r)$.
In fact for the uniform distribution on permutations, the first $n$ moments of the number of fixed points equal the first $n$ moments of the Poisson(1) distribution \cite{DS}.

Let us now describe in some more detail the three types of non-uniform permutations we study. 

First, we show that the number of fixed points of $g^{-1}x^{-1}gx$ where $x$ and $g$ are both uniform in $S_n$ tends to a Poisson(1) distribution
as $n \rightarrow \infty$. This result is due to Nica (Corollary 1.2 of \cite{N}) and later Linial and Puder (Theorem 25 of \cite{LP}). Neither of these papers
uses representation theory. A somewhat related result is Ore's conjecture, posed in 1951 \cite{O} and solved in 2010 \cite{LOST} following a long line of work on various cases, which states that every element of a finite non-abelian simple group $G$ is a commutator. This deep result relies on the fact, which we will also implicitly use, that the number of ways of writing an element $y$ as a commutator is equal to \[ |G| \sum_{\chi} \frac{\chi(g)}{\chi(1)}, \] where $\chi$ ranges over all irreducible characters of $G$.
Since our interest is in $G=S_n$, whose irreducible representations are parametrized by partitions of $n$, we will usually write our sums as over partitions of $n$.

Second, we study the distribution of fixed points of the commutator $g^{-1}x^{-1}gx$, where $g$ is uniform in $S_n$ and $x$ is fixed. Diaconis, Evans, and Graham \cite{DEG} studied this in depth in the case that $x$ is an $n$-cycle. They also proved that under weak conditions on the number of fixed points and two-cycles of $x$, that the total variation distance between the distribution of fixed points of $g^{-1}x^{-1}gx$ and a Poisson(1) distribution is small when $n$ is large. They use ``Stein's method'', and a remarkable aspect of their result is that they get an error term, although without explicit constants. Note that representation theory does not appear in the work, and the relevance of representation theory to the study of $g^{-1}x^{-1}gx$ with $x$ fixed follows from a known (but not widely known) ``functional equation'' for characters of finite groups. We do not obtain an error term for the Poisson approximation, although our formulas for the moments of the distribution might eventually be useful for that purpose.

Third, we study the distribution of fixed points of a permutation obtained by multiplying  $\frac{1}{i}n \log(n)+cn$ many i-cycles, where $c$ is a fixed real number and $i$ is fixed. We prove that the distribution of fixed points converges to a Poisson($1+e^{-ic}$) distribution as $n \rightarrow \infty$. This is a new result, previously known for $i=2$. The $i=2$ case was first proved by Matthews \cite{Mat} using strong uniform times. The $i=2$ case is also implicit in the recent breakthrough work of Teyssier \cite{Te}, who proved that for $c$ a fixed real number, the total variation distance between the distribution after $\frac{1}{2}n \log(n)+cn$  transpositions and the uniform distribution on $S_n$ converges to the total variation distance between a Poisson($1+e^{-2c}$) distribution and a Poisson(1) distribution. Teyssier's work does use representation theory, but our approach is quite different.

The mixing time for the random i-cycle Markov chain has been studied by Berestycki, Schramm, and Zeitouni \cite{BSZ} without representation theory, using clever probabilistic techniques, and by Hough \cite{Ho} using representation theory. In fact a modification of one of Hough's estimates on character ratios of i-cycles, due to Jimmy He and appearing here for the first time, will be very useful for us. 

A key ingredient in our work is a remarkable formula for the decomposition of the $r$th tensor power of the defining representation of the symmetric group.
This follows in a straightforward way from our earlier papers and seems to be new, which is a quite surprising fact. We also include a second proof, due to Alex Miller. As we point out, our formula implies a formula of Ding \cite{Din} which in turn implies a formula of Goupil and Chauve \cite{GC}. The formula of Ding requires that $1 \leq r \leq n-\lambda_2$, whereas our formula hods without restrictions. For our applications to commutators in this paper, our formula is more useful than Ding's. For our applications to the $i$-cycle walk, one could also use Ding's version. However even here our version could more prove useful in the future, since if one wants to obtain error terms for our theorems using moment generating functions, it is helpful to have control over all moments, including those growing with $n$.

The paper is organized as follows. Section \ref{prelim} gives useful formulas for the decomposition of the $r$th tensor power of the defining representation. Section \ref{commutators} treats our first two applications: the distribution of fixed points of the commutator $g^{-1}x^{-1}gx$ with $g,x$ random, and the distribution of fixed points of the commutator $g^{-1}x^{-1}gx$ with $g$ random and $x$ fixed. Section \ref{transposwalk} treats the distribution of fixed points of a permutation obtained by multiplying $\frac{1}{i}n \log(n)+cn$ random i-cycles, where $c$ and $i$ are fixed.

Throughout this paper we use standard terminology about Young tableaux and representation theory of the symmetric group; see \cite{Mac} or \cite{Sagan}. So $\lambda$ denotes a partition of size $|\lambda|$, $\lambda_j$ is the size of the jth part of $\lambda$, $d_{\lambda}$ is the number of standard Young tableaux of shape $\lambda$, and $d_{\lambda/\mu}$ denotes the number of standard Young tableaux of skew shape $\lambda/\mu$. We also remind the reader that all irreducible characters of $S_n$ are real valued.

\section{Decomposition of powers of the defining representation} \label{prelim}

In all that follows we let $m_{\lambda,r}$ denote the multiplicity of the representation corresponding to $\lambda$ in the $rth$ tensor power of the $n$-dimensional defining representation of the symmetric group. For example if $r=1$, then $m_{(n),1}=1$, $m_{(n-1,1)}=1$ and other $m_{\lambda,1}$ are equal to zero. Theorem \ref{explicit} is our main new tool and gives a formula for $m_{\lambda,r}$. This involves the Stirling number of the second kind, which is the number of set partitions of $r$ into $a$ blocks and is denoted $S(r,a)$.

\begin{theorem} \label{explicit} Let $d_{\lambda/\mu}$ denote the number of standard Young tableaux of skew shape $\lambda/\mu$. Then
\[ m_{\lambda,r} = \sum_{a=0}^r S(r,a) d_{\lambda/(n-a)}. \] 
\end{theorem}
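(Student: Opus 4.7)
The plan is to analyze the $r$th tensor power of the defining representation directly as a permutation module. Let $V$ denote the defining representation of $S_n$, with basis $\{e_1,\ldots,e_n\}$ permuted by $S_n$. Then $V^{\otimes r}$ has basis indexed by tuples $(x_1,\ldots,x_r)\in\{1,\ldots,n\}^r$, on which $S_n$ acts diagonally, so $V^{\otimes r}$ is itself a permutation representation. The strategy is to decompose it into its transitive constituents and then read off the multiplicity of $\lambda$ via Frobenius reciprocity.

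First I would describe the $S_n$-orbits on $\{1,\ldots,n\}^r$: two tuples lie in the same orbit iff they have the same equality pattern, which is encoded by the set partition $\pi$ of $\{1,\ldots,r\}$ in which $i$ and $j$ share a block precisely when $x_i=x_j$. For a set partition $\pi$ with $a$ blocks (necessarily $a\le n$), a point-stabilizer in the corresponding orbit is a Young subgroup $S_{n-a}$, so the associated transitive module is $\mathrm{Ind}_{S_{n-a}}^{S_n}(\mathbf{1})$. Since there are $S(r,a)$ set partitions of $\{1,\ldots,r\}$ into $a$ blocks, this yields the decomposition
$$V^{\otimes r} \;\cong\; \bigoplus_{a=0}^{\min(r,n)} S(r,a)\cdot \mathrm{Ind}_{S_{n-a}}^{S_n}(\mathbf{1}).$$

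Next, by Frobenius reciprocity the multiplicity of the irreducible $\lambda$ in $\mathrm{Ind}_{S_{n-a}}^{S_n}(\mathbf{1})$ equals the multiplicity of the trivial $S_{n-a}$-representation in $\mathrm{Res}_{S_{n-a}}^{S_n}(\lambda)$. Iterating the classical branching rule $a$ times gives
$$\mathrm{Res}_{S_{n-a}}^{S_n}(\lambda) \;\cong\; \bigoplus_{\mu\vdash n-a} d_{\lambda/\mu}\,\mu,$$
so the multiplicity of the trivial representation, which corresponds to the one-row partition $\mu=(n-a)$, is exactly $d_{\lambda/(n-a)}$. Combining this with the orbit decomposition produces the claimed formula, adopting the convention that $d_{\lambda/(n-a)}=0$ whenever $(n-a)$ is not a subpartition of $\lambda$ (in particular when $n-a<0$); this lets the sum be extended to all $a\in\{0,\ldots,r\}$ without restriction.

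I do not anticipate any serious obstacle: both the orbit decomposition of a tensor power of a permutation module and the iterated branching rule are entirely classical. The only real insight is to recognize that grouping orbits by the number of blocks of their defining set partition is precisely what produces the Stirling numbers $S(r,a)$; once this is observed the identity essentially writes itself. The only thing to watch is the boundary bookkeeping (cases $r>n$ or $\lambda_1<n-a$), which is absorbed into the vanishing of $d_{\lambda/(n-a)}$.
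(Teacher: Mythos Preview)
Your argument is correct. The orbit decomposition of the permutation module $V^{\otimes r}$ on $\{1,\ldots,n\}^r$ by equality pattern is standard, the identification of the point stabiliser of a tuple with $a$ distinct values as a copy of $S_{n-a}$ is right, and Frobenius reciprocity together with the iterated branching rule gives exactly $d_{\lambda/(n-a)}$ for the multiplicity of $\lambda$ in $\mathrm{Ind}_{S_{n-a}}^{S_n}(\mathbf{1})$. The boundary cases ($a>n$, or $\lambda_1<n-a$) are handled correctly by the vanishing convention for $d_{\lambda/(n-a)}$.

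Your route is genuinely different from both proofs in the paper. The paper's first proof is indirect: it invokes a result identifying $m_{\lambda,r}d_\lambda/n^r$ with the probability that $\lambda$ is the RSK shape after $r$ top-to-random shuffles, and then a second shuffling result expressing this probability via a balls-in-cells model, from which the Stirling numbers emerge. The second proof (due to Miller) works at the level of induction/restriction operators $U,D$: it shows that multiplying by the defining character is the operator $UD$, and then uses the differential-poset identity $(UD)^r=\sum_a S(r,a)\,U^aD^a$ applied to the one-row partition $(n)$. Your proof bypasses both the shuffling machinery and the operator identity by reading the Stirling numbers straight off the orbit structure of $\{1,\ldots,n\}^r$; this is arguably the most elementary of the three, needing only Frobenius reciprocity and the branching rule. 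What Miller's version buys in exchange is a clean algebraic explanation of the Stirling numbers via the Weyl-algebra-type relation $DU-UD=I$, which generalises to other differential posets; what yours buys is that no auxiliary identity is needed at all.
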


\begin{proof} By Theorem 3.1 of \cite{F1}. $m_{\lambda,r} d_{\lambda}/n^r$ is equal to the probability that $\lambda$ is the RSK shape of a permutation obtained after $r$ top to random shuffles (see \cite{Sagan} for background on the RSK correspondence). So by Theorem 17 of \cite{F2}, it follows that
\begin{equation} \label{eq1}  m_{\lambda,r} = \frac{n^r}{n!} \sum_{a=0}^n P(a,r,n) (n-a)! d_{\lambda/(n-a)}, \end{equation} where $P(a,r,n)$ is the probability that when $r$ balls are dropped into $n$ cells, that there are $a$ occupied cells. Now we claim that \begin{equation} \label{eq2} P(a,r,n) = \frac{1}{n^r} \frac{n!}{(n-a)!} S(r,a). \end{equation} Indeed, there are $n^r$ many ways to drop $r$ balls into $n$ cells; if exactly $a$ cells are occupied, one chooses the $a$ cells in ${n \choose a}$ many ways and then puts the $r$ balls into the $a$ cells in $S(r,a) a!$ many ways. Combining \eqref{eq1} and \eqref{eq2} completes the proof.
\end{proof}

{\it Remark:} Our proof of Theorem \ref{explicit} used connections with card shuffling. We consider this to be a good thing, and in an earlier
version of this paper commented that a more direct proof would be desirable. Alex Miller (personal communication) contributed the following
proof which may help to explain the occurrence of Stirling numbers.

\begin{proof} (Second proof of Theorem \ref{explicit}) Let $U$ and $D$ be short for induction and restriction between $S_n$ and $S_{n-1}$ and let $1$ be the trivial character. The character we want to decompose is $(U1)^r$.

We claim that $(U1) \chi = UD \chi$ for any character $\chi$, so in particular $(U1)^r$ is equal to $(UD)^r(1)$. To prove the claim, note that by
Frobenius reciprocity \[ <U1 \cdot \chi, \psi> = <U1, \chi \psi> = <1,D\chi \cdot D\psi> = <D\chi,D\psi> = <UD\chi,\psi>.\] Since this is true
for all $\psi$, the claim follows.

It is known \cite{Standif} that $DU-UD$ is the identity operator and this allows one to move the $D$'s from left to right past the $U$'s using the rule
\[ DU^k = kU^{k-1} + U^kD.\] It follows by induction that \begin{equation} \label{stirling}  (UD)^r = \sum_{a=0}^r S(r,a) U^a D^a \end{equation} (this equation is also Proposition 4.9  of \cite{Standif}). By the branching rules for irreducible representations of the symmetric group,
the operator $D$ removes a corner box from a partition and the operator $U$ adds a corner box. So applying
\eqref{stirling} to the one row partition $(n)$ and taking the coefficient of $\lambda$ gives another proof of Theorem \ref{explicit}.
\end{proof}

{\it Remark:} Alex Miller has made the following observation. Theorem \ref{explicit} tells us how $(U1)^r$ decomposes:
\[ (U1)^r = \sum_{\lambda} m_{\lambda,r} \chi^{\lambda}.\] Evaluating both sides at the identity gives
\[ n^r = \sum_{\lambda} m_{\lambda,r} d_{\lambda}.\] This last identity appears in \cite{HL} and was proved using Schur-Weyl
duality. Their paper is well cited including a very recent paper by Krattenthaler \cite{Kr} concerning bijective proofs.
\[ \]
The following theorem of Ding \cite{Din} gives a formula for $m_{\lambda,r}$, assuming that $1 \leq r \leq n-\lambda_2$ (here $\lambda_2$ denotes the size of the second part of $\lambda$). We let $\bar{\lambda}$ denote the partition obtained from $\lambda$ be removing its largest part. The proof we give shows that Theorem \ref{dingsformula} is a straightforward consequence of Theorem \ref{explicit}.

\begin{theorem} \label{dingsformula} Suppose that $1 \leq r \leq n-\lambda_2$. Then
\[ m_{\lambda,r} = d_{\bar{\lambda}} \sum_{a=0}^r S(r,a) {a \choose |\bar{\lambda}|}.\]
\end{theorem}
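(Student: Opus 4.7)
The plan is to derive Theorem \ref{dingsformula} directly from Theorem \ref{explicit} by computing $d_{\lambda/(n-a)}$ explicitly under the hypothesis $1 \leq r \leq n-\lambda_2$. Theorem \ref{explicit} gives
\[ m_{\lambda,r} = \sum_{a=0}^r S(r,a)\, d_{\lambda/(n-a)}, \]
so it suffices to show that $d_{\lambda/(n-a)} = \binom{a}{|\bar{\lambda}|}\, d_{\bar{\lambda}}$ for each $a$ in the summation range.

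First I would deal with the degenerate cases. The skew shape $\lambda/(n-a)$ only makes sense when the one-row partition $(n-a)$ is contained in $\lambda$, i.e., when $\lambda_1 \geq n-a$. Since $\lambda_1 = n - |\bar{\lambda}|$, this is equivalent to $a \geq |\bar{\lambda}|$. When $a < |\bar{\lambda}|$, the skew tableau count vanishes and so does $\binom{a}{|\bar{\lambda}|}$, so those terms match trivially.

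The main step is the case $a \geq |\bar{\lambda}|$. The hypothesis $r \leq n - \lambda_2$ combined with $a \leq r$ gives $n - a \geq \lambda_2$. Thus the first $n-a$ columns of $\lambda$, which we are removing from row 1, already contain every box of row 2 (and hence of every row beneath). Consequently, the skew shape $\lambda/(n-a)$ splits as the \emph{disjoint} union of (i) a single horizontal strip consisting of the rightmost $\lambda_1 - (n-a) = a - |\bar{\lambda}|$ boxes of row 1 and (ii) the full sub-partition $\bar{\lambda}$. This is the key geometric observation, and I expect checking this decomposition carefully to be the main (though modest) obstacle, since one must verify that no column constraints link the two pieces.

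Having established the disconnected decomposition, the SYT count is standard: a standard filling of $\lambda/(n-a)$ amounts to choosing which $|\bar{\lambda}|$ of the $a$ entries $\{1,\dots,a\}$ go into $\bar{\lambda}$ (the remaining entries fill the single row of length $a - |\bar{\lambda}|$ in a unique increasing way), and then filling $\bar{\lambda}$ as a standard tableau. Hence
\[ d_{\lambda/(n-a)} = \binom{a}{|\bar{\lambda}|}\, d_{\bar{\lambda}}. \]
Substituting this into the formula from Theorem \ref{explicit} and pulling the constant factor $d_{\bar{\lambda}}$ out of the sum yields
\[ m_{\lambda,r} = d_{\bar{\lambda}} \sum_{a=0}^r S(r,a) \binom{a}{|\bar{\lambda}|}, \]
which is Ding's formula.
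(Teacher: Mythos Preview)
Your proof is correct and follows essentially the same route as the paper: both derive Ding's formula from Theorem \ref{explicit} by using the hypothesis $a\leq r\leq n-\lambda_2$ to ensure $\lambda_2\leq n-a$, so that $d_{\lambda/(n-a)}=\binom{a}{|\bar\lambda|}\,d_{\bar\lambda}$. Your treatment is slightly more explicit about the degenerate case $a<|\bar\lambda|$ and phrases the count in terms of the disconnected skew shape rather than box-addition paths, but the substance is identical.
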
 

\begin{proof} Since $1 \leq r \leq n-\lambda_2$, we know that if $0 \leq a \leq r$, then $\lambda_2 \leq n-a$. It also follows that
\[ d_{\lambda/(n-a)} = d_{\bar{\lambda}} {a \choose |\bar{\lambda}|}.\] Indeed, to get $\lambda$ from $(n-a)$ by adding $a$ many boxes one at a time,
one must choose $a-n+\lambda_1$ steps out of $a$ steps to add a box to the first row (in  ${a \choose a-n+\lambda_1} = {a \choose \bar{\lambda}}$ many ways), and the other steps are used to construct $\bar{\lambda}$ in $d_{\bar{\lambda}}$ many ways.
\end{proof} 

\section{Commutators} \label{commutators}

In this section we study the distribution of the number of fixed points for commutators  $g^{-1}x^{-1}gx$ with $g,x$ random, and the distribution of the number of fixed points of the commutator $g^{-1}x^{-1}gx$ with $g$ random and $x$ fixed.

Lemmas \ref{firstlem} and \ref{seclem} are both known. However Lemma \ref{firstlem} is not well known (the only proof we found was on page 397 of \cite{Mac} and it's hard to follow), so we include a different proof and show that Lemma \ref{seclem} follows from it.

\begin{lemma} \label{firstlem} Let $\chi$ be an irreducible character of a finite group $G$. Then for all elements $x$ of $G$,
\[ \frac{1}{|G|} \sum_{g \in G} \chi(g^{-1}x^{-1}gx) = \frac{|\chi(x)|^2}{\chi(1)} .\]
\end{lemma}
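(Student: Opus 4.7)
The plan is to invoke Schur's lemma on a well-chosen averaging operator. Fix $x \in G$ and let $\rho \colon G \to \GL(V)$ be an irreducible matrix representation affording $\chi$, so $\dim V = \chi(1)$. I would consider the operator
\[ T := \sum_{g \in G} \rho(g)^{-1} \rho(x)^{-1} \rho(g) \]
on $V$. Reindexing by $g \mapsto hg$ leaves the sum unchanged, which yields $\rho(h)^{-1} T \rho(h) = T$ for every $h \in G$. Hence $T$ commutes with all of $\rho(G)$, and since $\rho$ is irreducible, Schur's lemma forces $T = \lambda I$ for some scalar $\lambda \in \CC$.

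Next I would pin down $\lambda$ by taking traces. Each summand $\rho(g)^{-1}\rho(x)^{-1}\rho(g)$ has trace $\chi(x^{-1})$, so $\operatorname{tr}(T) = |G|\,\chi(x^{-1})$, while $\operatorname{tr}(\lambda I) = \lambda \chi(1)$. Therefore $\lambda = |G|\,\chi(x^{-1})/\chi(1)$. Multiplying $T$ on the right by $\rho(x)$ and taking the trace again gives
\[ \sum_{g \in G} \chi(g^{-1}x^{-1}gx) = \operatorname{tr}(T\rho(x)) = \lambda\, \chi(x) = \frac{|G|\,\chi(x^{-1})\chi(x)}{\chi(1)}. \]
Using the general identity $\chi(x^{-1}) = \overline{\chi(x)}$, the right hand side equals $|G|\,|\chi(x)|^2/\chi(1)$, and dividing by $|G|$ yields the stated formula.

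I do not expect a serious obstacle: the whole argument is a single application of Schur's lemma plus two trace computations. The only bookkeeping worth being careful about is the placement of $x^{-1}$ inside $T$ (so that $T\rho(x)$ reassembles the commutator correctly) and the conjugation identity $\chi(x^{-1}) = \overline{\chi(x)}$, which is needed for the absolute value to appear on the right hand side — though in the application to $G = S_n$ this is moot since all irreducible characters are real, as noted earlier in the paper. An alternative route would be to work directly on the center of the group algebra, recognizing the left hand side as $(\chi \ast \chi)(e)$ up to a reparametrization, but the Schur's lemma approach is cleanest.
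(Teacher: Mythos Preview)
Your proof is correct and is essentially the same as the paper's: both define the averaging operator $\sum_{g}\rho(g^{-1}yg)$, apply Schur's lemma to identify it as a scalar, determine the scalar by a trace computation, and then multiply by $\rho(x)$ and take one more trace. The only cosmetic difference is that the paper first records the two-variable identity $\chi(x)\chi(y)=\frac{\chi(1)}{|G|}\sum_{g}\chi(g^{-1}xgy)$ and then specializes to $y=x$, $x\mapsto x^{-1}$, whereas you go straight to the case needed.
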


\begin{proof} By replacing $x$ by $x^{-1}$ and $y$ by $x$, it is enough to show that
\[ \chi(x) \chi(y) = \frac{\chi(1)}{|G|} \sum_{g \in G} \chi(g^{-1}xgy) \] for all $x,y$. Indeed, $\chi(x^{-1})=\overline{\chi(x)}$.

Letting $\rho$ be the representation with character $\chi$, and letting $Tr$ denote trace,
one computes that
\begin{eqnarray*}
\sum_{g \in G} \chi(g^{-1}xgy) & = & \sum_{g \in G} Tr(\rho(g^{-1}xgy)) \\
& = & \sum_{g \in G} Tr(\rho(g^{-1}xg) \cdot \rho(y)) \\
& = & Tr(\sum_{g \in G} \rho(g^{-1}xg) \cdot \rho(y)).
\end{eqnarray*} 

One checks that $\sum_{g \in G} \rho(g^{-1}xg)$ commutes with all $\rho(y)$, so by Schur's lemma
(Corollary 1.6.8 of \cite{Sagan}), \[ \sum_{g \in G} \rho(g^{-1}xg) = c I \] is a multiple of the identity. Taking traces gives that
\[ c= \frac{|G| \chi(x)}{\chi(1)}.\] Thus
\[ \sum_{g \in G} \chi(g^{-1}xgy) = |G| \frac{\chi(x)}{\chi(1)} \chi(y).\]
\end{proof}

\begin{lemma} \label{seclem} Let $\chi$ be an irreducible character of a finite group $G$. Then
\[ \frac{1}{|G|^2} \sum_{x,g \in G} \chi(g^{-1}x^{-1}gx) = \frac{1}{\chi(1)}.\]
\end{lemma}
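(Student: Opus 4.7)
The plan is to derive Lemma \ref{seclem} as an immediate consequence of Lemma \ref{firstlem} combined with the standard orthogonality relation for irreducible characters.

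First I would split the double sum by conditioning on $x$:
\[
\frac{1}{|G|^2} \sum_{x,g \in G} \chi(g^{-1}x^{-1}gx) = \frac{1}{|G|}\sum_{x \in G}\left( \frac{1}{|G|}\sum_{g \in G} \chi(g^{-1}x^{-1}gx)\right).
\]
Next, I would apply Lemma \ref{firstlem} to the inner average, which identifies it as $|\chi(x)|^2/\chi(1)$. Pulling the factor $\chi(1)^{-1}$ outside reduces the problem to showing that
\[
\frac{1}{|G|}\sum_{x \in G} |\chi(x)|^2 = 1.
\]
This is precisely the orthogonality relation $\langle \chi,\chi\rangle = 1$ for an irreducible character (using that $\chi(x^{-1}) = \overline{\chi(x)}$, so $\chi(x)\overline{\chi(x)} = |\chi(x)|^2$). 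Substituting back yields $1/\chi(1)$, as desired.

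There is no real obstacle here: the work has already been done in Lemma \ref{firstlem}, and the remaining step is just the inner product of $\chi$ with itself. The only thing worth being careful about is making explicit that the averaging over $x$ turns the Schur-lemma identity of Lemma \ref{firstlem} into the orthogonality relation, which is what produces the clean answer $1/\chi(1)$.
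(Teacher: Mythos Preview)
Your proposal is correct and matches the paper's proof essentially line for line: split the double average as an outer average over $x$ of the inner average over $g$, apply Lemma~\ref{firstlem} to the inner average, and then invoke the orthogonality relation $\langle \chi,\chi\rangle = 1$ to obtain $1/\chi(1)$.
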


\begin{proof} Using Lemma \ref{firstlem} and then the orthogonality relation for irreducible characters gives
\[ \frac{1}{|G|} \sum_x \frac{1}{|G|} \sum_g \chi(g^{-1}x^{-1}gx) = \frac{1}{|G|} \sum_x \frac{|\chi(x)|^2}{\chi(1)}
= \frac{1}{\chi(1)}.\]
\end{proof}

{\it Remark:} There has been some use of symmetric function theory to study commutators. See for example Exercise 7.68 of \cite{Stan};
part g gives a formula for the expected number of j-cycles of $g^{-1}x^{-1}gx$ where $x,g$ are uniform in $S_n$.

Theorem \ref{mainfirst} is one of our main results. It is due to Nica \cite{N} and was proved by completely different methods.

\begin{theorem} \label{mainfirst} The distribution of the number of fixed points of the the commutator $g^{-1}x^{-1}gx$ where $x,g$ are uniform in $S_n$ converges to a Poisson(1) distribution as $n \rightarrow \infty$.
\end{theorem}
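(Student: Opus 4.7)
The plan is to apply the method of moments. Since the Poisson(1) distribution is determined by its moments, it suffices to show that for each fixed integer $r \geq 0$, the $r$th moment of $F([g,x])$ (where $F(\pi)$ denotes the number of fixed points of $\pi$ and $[g,x] = g^{-1}x^{-1}gx$) converges to the $r$th Bell number $B(r) = \sum_{a=0}^r S(r,a)$, which is the $r$th moment of Poisson(1).

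I would first observe that $F(\pi)$ equals $\chi^{\mathrm{def}}(\pi)$, the character of the $n$-dimensional defining representation of $S_n$. Hence $F(\pi)^r$ is the character of the $r$th tensor power of that representation, so by Theorem \ref{explicit},
\[ F(\pi)^r = \sum_{\lambda \vdash n} m_{\lambda,r}\, \chi^\lambda(\pi), \qquad m_{\lambda,r} = \sum_{a=0}^r S(r,a)\, d_{\lambda/(n-a)}. \]
Averaging over $g,x$ uniform in $S_n$ and applying Lemma \ref{seclem} to each irreducible character $\chi^\lambda$ individually then yields
\[ \frac{1}{(n!)^2}\sum_{g,x \in S_n} F([g,x])^r \;=\; \sum_{\lambda \vdash n} \frac{m_{\lambda,r}}{d_\lambda} \;=\; \sum_{a=0}^r S(r,a) \sum_{\lambda \vdash n} \frac{d_{\lambda/(n-a)}}{d_\lambda}. \]

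The main analytic step is then to show that for each fixed $a \geq 0$, the inner sum $\sum_\lambda d_{\lambda/(n-a)}/d_\lambda$ tends to $1$ as $n \to \infty$. Since $d_{\lambda/(n-a)}$ vanishes unless $\lambda_1 \geq n-a$, I parametrise the relevant shapes as $\lambda = (n-k,\mu)$ with $0 \leq k \leq a$ and $\mu \vdash k$. The trivial case $k=0$ (i.e., $\lambda = (n)$) contributes exactly $1$. For $k \geq 1$, the skew shape $\lambda/(n-a)$ consists of $a-k$ isolated boxes in row $1$ together with a copy of $\mu$ below, so $d_{\lambda/(n-a)} = \binom{a}{k} d_\mu$, while the hook length formula gives the standard asymptotic $d_{(n-k,\mu)} \sim n^k d_\mu / k!$ for fixed $\mu \vdash k$ as $n \to \infty$. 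Therefore $d_{\lambda/(n-a)}/d_\lambda = O(n^{-k}) \to 0$ for each fixed such $\lambda$. Because the number of partitions $\mu$ of integers $k \leq a$ is finite and independent of $n$, the inner sum converges to $1$, whence the $r$th moment converges to $B(r)$.

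The only real obstacle is the hook length asymptotic $d_{(n-k,\mu)} \sim n^k d_\mu / k!$, but this is a routine computation: the hooks of $(n-k,\mu)$ outside the subdiagram $\mu$ contribute $n!/(n-k-\mu_1)!$ divided by $\prod_{j=1}^{\mu_1}(n-k-j+\mu'_j)$, and the hooks inside $\mu$ give exactly $k!/d_\mu$. All genuine representation-theoretic content has already been packaged into Theorem \ref{explicit} and Lemma \ref{seclem}, so the remainder is bookkeeping.
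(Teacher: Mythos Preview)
Your proof is correct and follows essentially the same route as the paper: express the $r$th moment via Lemma~\ref{seclem} as $\sum_\lambda m_{\lambda,r}/d_\lambda$, expand $m_{\lambda,r}$ via Theorem~\ref{explicit}, swap sums, and show $\sum_\lambda d_{\lambda/(n-a)}/d_\lambda \to 1$. The only difference is in the last estimate: the paper bounds $d_{\lambda/(n-a)}$ by a constant and invokes the elementary inequality $d_\lambda \geq n-1$ for $\lambda \neq (n),(1^n)$ (citing Burnside), whereas you use the sharper hook-length asymptotic $d_{(n-k,\mu)} \sim n^k d_\mu/k!$; both work and yield the same $1+O(1/n)$ conclusion.
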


\begin{proof} By Lemma \ref{seclem}, the rth moment of the distribution of the number of fixed points of $g^{-1}x^{-1}gx$
where $x,g$ are uniform in $S_n$ is equal to \[ \sum_{|\lambda|=n} \frac{m_{\lambda,r}}{d_{\lambda}}.\] From Theorem
\ref{explicit} this is equal to 
\[ \sum_{\lambda} \sum_{a=0}^r S(r,a) \frac{d_{\lambda/(n-a)}}{d_{\lambda}}.\] Switching the order of summation gives
\[\sum_{a=0}^r S(r,a) \sum_{\lambda} \frac{d_{\lambda/(n-a)}}{d_{\lambda}}.\]

Now for $r$ fixed and large $n$, since $a \leq r$, we claim that 
\begin{equation} \label{mainclaim} \sum_{\lambda} \frac{d_{\lambda/(n-a)}}{d_{\lambda}} = 1 + O(1/n), \end{equation}
where the constant in $O(1/n)$ could depend on $r$. This would imply the theorem since
\[ \sum_{a=0}^r S(r,a) (1+O(1/n)) = B(r)+O(1/n), \] where $B(r)$ is the rth Bell number (the total number of set partitions of an
$r$ element set) and the rth moment of a Poisson(1) distribution is equal to $B(r)$.

To prove \eqref{mainclaim}, note that the partition $(n)$ contributes 1 to the sum. Since $a\leq r$, one has that $d_{\lambda/(n-a)}$ is upper bounded by
a constant depending on $r$ and if $\lambda$ is not equal to $(n)$ or $(1^n)$ and $n \geq 5$, then $d_{\lambda} \geq n-1$ by Appendix
C of \cite{Burn}. Also, the number of $\lambda$ containing $(n-a)$ is bounded by a constant depending on $r$. These observations complete
the proof of \eqref{mainclaim}.
\end{proof}

We also record the following corollary of the proof of Theorem \ref{mainfirst}.

\begin{cor} For all $n$ and $r$, the rth moment  of the distribution of the number of fixed points of $g^{-1}x^{-1}gx$, with $g,x$ uniform in $S_n$, is greater than or equal to $B(r)$, the rth moment of the Poisson(1) distribution.
\end{cor}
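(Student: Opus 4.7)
The plan is to reuse the explicit moment formula that fell out of the proof of Theorem \ref{mainfirst}. There we showed, combining Lemma \ref{seclem} with Theorem \ref{explicit}, that the $r$th moment of the number of fixed points of $g^{-1}x^{-1}gx$ (with $g,x$ uniform in $S_n$) equals
\[ \sum_{a=0}^r S(r,a) \sum_{|\lambda|=n} \frac{d_{\lambda/(n-a)}}{d_{\lambda}}. \]
Since $B(r) = \sum_{a=0}^{r} S(r,a)$ and $S(r,a)\geq 0$, it suffices to prove that for every $a$ with $0 \leq a \leq r$,
\[ \sum_{|\lambda|=n} \frac{d_{\lambda/(n-a)}}{d_{\lambda}} \geq 1. \]

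To establish this inequality, I would simply isolate the contribution of the one-row partition $\lambda=(n)$. The skew diagram $(n)/(n-a)$ is a horizontal strip of $a$ boxes, so it has exactly one standard Young tableau, and $d_{(n)}=1$ as well. Thus the $\lambda=(n)$ term contributes exactly $1$. Every other term $d_{\lambda/(n-a)}/d_{\lambda}$ is a ratio of non-negative integers (and is set to $0$ by convention when $(n-a)\not\subseteq\lambda$), hence non-negative. So the whole sum is at least $1$.

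Summing $S(r,a)$ times this inequality over $0 \leq a \leq r$ yields that the $r$th moment is at least $B(r)$, which is precisely the $r$th moment of Poisson$(1)$. There is essentially no obstacle here: the content of the argument is the observation that the trivial representation alone accounts for exactly the Bell number and that all remaining irreducibles can only increase the moment. In particular, the corollary says that the Poisson$(1)$ bound of Theorem \ref{mainfirst} is approached from above, with the excess controlled by the $O(1/n)$ estimate \eqref{mainclaim} obtained there.
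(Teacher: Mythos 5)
Your proposal is correct and follows the paper's proof exactly: both use the moment formula $\sum_{a=0}^r S(r,a) \sum_{\lambda} d_{\lambda/(n-a)}/d_{\lambda}$, observe that all terms are nonnegative, and note that restricting to $\lambda=(n)$ already yields $\sum_{a=0}^r S(r,a) = B(r)$. The only difference is that you spell out $d_{(n)/(n-a)}=d_{(n)}=1$ explicitly, which the paper leaves implicit.
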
 

\begin{proof} From the proof of Theorem \ref{mainfirst}, the rth moment is a sum of positive terms:
\[ \sum_{a=0}^r S(r,a) \sum_{\lambda} \frac{d_{\lambda/(n-a)}}{d_{\lambda}}.\] Summing only over $\lambda=(n)$ yields $B(r)$.
\end{proof}

Next we study the commutator $g^{-1}x^{-1}gx$ where $g$ is random and $x$ is fixed.

\begin{theorem} \label{complow} Consider $g^{-1}x^{-1}gx$ where $g$ is random and $x$ is a fixed element of $S_n$. Let $n_1(x)$ be the
number of fixed points of $x$ and let $n_2(x)$ be the number ot 2-cycles of $x$. 
\begin{enumerate}
\item The rth moment of the distribution of the number of fixed points of $g^{-1}x^{-1}gx$ is equal to
\[ \sum_{|\lambda|=n} m_{\lambda,r} \frac{\chi^{\lambda}(x)^2}{d_{\lambda}}.\] 
\item The mean of the distribution of the number of fixed points of  $g^{-1}x^{-1}gx$ is equal to
\[ 1 + \frac{(n_1(x)-1)^2}{n-1}.\]
\item The second moment of the distribution of the number of fixed points of  $g^{-1}x^{-1}gx$ is equal to

\begin{eqnarray*}
& & 2 + \frac{3(n_1(x)-1)^2}{n-1} + \frac{[{n_1(x)-1 \choose 2} + n_2(x)-1]^2}{n(n-3)/2} \\
& & + \frac{[{n_1(x)-1 \choose 2} - n_2(x)]^2}{(n-1)(n-2)/2}.
\end{eqnarray*}
\end{enumerate}
\end{theorem}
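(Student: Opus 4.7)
The plan is to prove part (1) by combining the definition of $m_{\lambda,r}$ with Lemma \ref{firstlem}, and then derive parts (2) and (3) as direct specializations at $r = 1$ and $r = 2$.

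For part (1), I would begin from the fact that the number of fixed points of a permutation $\pi \in S_n$ equals the character of the defining representation evaluated at $\pi$. Hence the $r$th power of the fixed-point count of $g^{-1}x^{-1}gx$ is the character of the $r$th tensor power of the defining representation at $g^{-1}x^{-1}gx$, which by the definition of the multiplicities equals $\sum_{|\lambda|=n} m_{\lambda,r} \chi^\lambda(g^{-1}x^{-1}gx)$. Averaging over $g \in S_n$ and invoking Lemma \ref{firstlem} for each $\chi^\lambda$ — using that all irreducible characters of $S_n$ are real-valued so that $|\chi^\lambda(x)|^2 = \chi^\lambda(x)^2$ — gives the formula of part (1) immediately.

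For part (2), I would specialize to $r = 1$. From Theorem \ref{explicit} together with $S(1,0) = 0$ and $S(1,1) = 1$ one has $m_{\lambda,1} = d_{\lambda/(n-1)}$, which vanishes except for $\lambda = (n)$ and $\lambda = (n-1,1)$, where it equals $1$. Using $\chi^{(n)}(x) = 1$, $d_{(n)} = 1$, $\chi^{(n-1,1)}(x) = n_1(x) - 1$, and $d_{(n-1,1)} = n - 1$, substitution into part (1) produces the claimed formula for the mean.

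For part (3), I would specialize to $r = 2$. Since $S(2,1) = S(2,2) = 1$, Theorem \ref{explicit} gives $m_{\lambda,2} = d_{\lambda/(n-1)} + d_{\lambda/(n-2)}$; the only partitions of $n$ for which this is nonzero are $(n)$, $(n-1,1)$, $(n-2,2)$, and $(n-2,1,1)$, and a short skew standard Young tableau count yields multiplicities $2, 3, 1, 1$, while the hook length formula gives dimensions $1$, $n-1$, $n(n-3)/2$, and $(n-1)(n-2)/2$. The main work — and what I expect to be the only real obstacle — lies in evaluating $\chi^{(n-2,2)}(x)$ and $\chi^{(n-2,1,1)}(x)$. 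For the former I would use the Young subgroup decomposition $\mathrm{Ind}_{S_{n-2} \times S_2}^{S_n}(1) = \chi^{(n)} + \chi^{(n-1,1)} + \chi^{(n-2,2)}$, whose left-hand side at $x$ equals the number of unordered $2$-subsets of $\{1,\ldots,n\}$ fixed by $x$, namely $\binom{n_1(x)}{2} + n_2(x)$; solving for $\chi^{(n-2,2)}(x)$ gives $\binom{n_1(x)-1}{2} + n_2(x) - 1$. For the latter I would use that the representation indexed by $(n-2,1,1)$ is the second exterior power of the standard representation together with the identity $\chi_{\wedge^2 V}(g) = \tfrac{1}{2}[\chi_V(g)^2 - \chi_V(g^2)]$ and the elementary relation $n_1(x^2) = n_1(x) + 2 n_2(x)$, producing $\chi^{(n-2,1,1)}(x) = \binom{n_1(x)-1}{2} - n_2(x)$. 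Substituting all of these into the formula of part (1) yields the expression claimed in part (3).
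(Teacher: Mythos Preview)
Your proposal is correct and follows essentially the same route as the paper: part (1) via Lemma \ref{firstlem}, and parts (2) and (3) by computing the multiplicities $m_{\lambda,1}$ and $m_{\lambda,2}$ from Theorem \ref{explicit} and plugging in the relevant character values and dimensions. The only difference is cosmetic: the paper simply cites \cite{FulHar} for the formulas $\chi^{(n-2,2)}(x)=\binom{n_1(x)-1}{2}+n_2(x)-1$ and $\chi^{(n-2,1,1)}(x)=\binom{n_1(x)-1}{2}-n_2(x)$, whereas you supply self-contained derivations via the induced permutation character on $2$-subsets and the exterior-square character formula.
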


\begin{proof} The first assertion is clear from Lemma \ref{firstlem}.

For the second assertion, we know from Lemma \ref{explicit} that $m_{(n),1}=m_{(n-1,1),1}=1$ and $m_{\lambda,1}=0$ otherwise.
Thus by part 1, the first moment of the distribution of the number of fixed points is equal to
\[ \frac{\chi^{(n)}(x)^2}{d_{(n)}} + \frac{\chi^{(n-1,1)}(x)^2}{d_{(n-1,1)}} .\] Since $\chi^{(n)}(x)=1$ and $\chi^{(n-1,1)}(x)=n_1(x)-1$,
the result follows.

For the third assertion, we know from Theorem \ref{explicit} that $m_{(n),2}=2$, $m_{(n-1,1),2}=3$, $m_{(n-2,2),2}=1$,
$m_{(n-2,1,1),2}=1$ and $m_{\lambda,2}=0$ otherwise. Thus by part 1, we know that the second moment of the distribution of
the number of fixed points is equal to
\[ 2\frac{\chi^{(n)}(x)^2}{d_{(n)}} + 3 \frac{\chi^{(n-1,1)}(x)^2}{d_{(n-1,1)}} + \frac{\chi^{(n-2,2)}(x)^2}{d_{(n-2,2)}} 
+ \frac{\chi^{(n-2,1,1)}(x)^2}{d_{(n-2,1,1)}}.\] Since (see page 51 of \cite{FulHar})
\[ \chi^{(n-2,2)}(x) = {n_1(x)-1 \choose 2} + n_2(x) -1 \] and
\[ \chi^{(n-2,1,1)}(x) = {n_1(x)-1 \choose 2} - n_2(x), \] the result follows.
\end{proof}

{\it Remark:} Section 6 of Diaconis, Evans, and Graham \cite{DEG} proves part 2 of Theorem \ref{complow} (so computing the
mean) without using representation theory. Parts 1 and 3 seem to be new.

{\bf Example 1:} The lead example of \cite{DEG} is the case where $x$ is an $n$-cycle. Then $|\chi^{\lambda}(x)|=1$ if $\lambda=(n-j,1^j)$ is a hook
and $|\chi^{\lambda}(x)|=0$ otherwise. So by part 1 of Theorem \ref{complow}, the rth moment of the distribution of the number of fixed points of
$g^{-1}x^{-1}gx$ is equal to \[ \sum_{\lambda \  hook \atop |\lambda|=n} \frac{m_{\lambda,r}}{d_{\lambda}}.\] From Theorem
\ref{explicit}, this is equal to 
\begin{equation} \label{start} \sum_{a=0}^r S(r,a) \sum_{\lambda \ hook} \frac{d_{\lambda/(n-a)}}{d_{\lambda}}.
\end{equation}

Now looking at only the $\lambda=(n)$ term shows that \eqref{start} is at least $B(r)$. And clearly \eqref{start} is at most
\[ \sum_{a=0}^r S(r,a) \sum_{\lambda} \frac{d_{\lambda/(n-a)}}{d_{\lambda}}, \] where the sum is over all $\lambda$
of size $n$. This is at most $B(r)+O(1/n)$ by the calculation in the proof of Theorem \ref{mainfirst}.

It follows that if $x$ is an n-cycle, then the distribution of the number of fixed points of $g^{-1}x^{-1}gx$ converges to a Poisson($1$)
random variable as $n \rightarrow \infty$. Moreover, for any reasonable metric, 
the distribution of fixed points for $g^{-1}x^{-1}gx$ with $x$ an $n$-cycle should
be closer to Poisson($1$) than the distribution of fixed points for  $g^{-1}x^{-1}gx$ with $x,g$ both random is to a Poisson($1$) since
all of its moments are closer to $B(r)$.

{\bf Example 2:} Suppose that $x$ has cycle type $(k^{n/k})$ where $k \geq 3$ is fixed.

By Theorem \ref{complow}, the rth moment of the distribution of the number of fixed points of  $g^{-1}x^{-1}gx$ is
\[ \sum_{|\lambda|=n} m_{\lambda,r} \frac{\chi^{\lambda}(x)^2}{d_{\lambda}}.\] By Theorem \ref{explicit}, this is equal to
\[ \sum_{a=0}^r S(r,a) \sum_{\lambda} \frac{d_{\lambda/(n-a)}}{d_{\lambda}} \chi^{\lambda}(x)^2.\] So to prove that the
distribution of the number of fixed points tends to a Poisson($1$) limit as $n \rightarrow \infty$, it is enough to show that
\begin{equation} \label{labor} \sum_{\lambda \neq (n)} \frac{d_{\lambda/(n-a)}}{d_{\lambda}} \chi^{\lambda}(x)^2
\end{equation} tends to $0$ as $n \rightarrow \infty$.

From the formulas in the proof of part 3 of Theorem \ref{complow},
\[ \chi^{(n-1,1)}(x) = -1 \ , \ \chi^{(n-2,2)}(x) = -1 \ , \ \chi^{(n-2,1,1)}(x) = 0.\] So the $\lambda=(n-1,1)$, $\lambda=(n-2,2)$,
$\lambda = (n-2,1,1)$ terms in \eqref{labor} go to $0$ as $n \rightarrow \infty$. Since $a \leq r$ is fixed, the number of other terms
in \eqref{labor} is bounded by a constant depending only on $r$. Moreover each of the other terms has $d_{\lambda}$ of order at
least $n^3$. By Fomin and Lulov \cite{FominLulov}, \[ |\chi^{\lambda}(x)| = O \left( n^{(k-1)/2k} d_{\lambda}^{1/k} \right),\] so
\[ \frac{|\chi^{\lambda}(x)|^2}{d_{\lambda}} = O \left( \frac{n^{1-1/k}}{d_{\lambda}^{1-2/k}} \right) = 
O \left( \frac{n^{1-1/k}}{n^{3(1-2/k)}} \right) = O \left( \frac{n^{5/k}}{n^2} \right), \] which tends to $0$ as $n \rightarrow \infty$
since $k \geq 3$. This is consistent with the findings of \cite{DEG}; see Example 4 below.

{\bf Example 3:} Suppose that $x$ has cycle type $(2^{(n/2)})$. From Section 6 of \cite{DEG} the distribution of one-half multiplied by the
number of fixed points of $g^{-1}x^{-1}gx$ goes to a Poisson($1/2$) distribution as $n \rightarrow \infty$. Since a Poisson($\lambda$)
distribution has rth moment $\sum_{a=0}^r S(r,a) \lambda^a$, it follows that the rth moment of the distribution of fixed points
of $g^{-1}x^{-1}gx$ tends to
\begin{equation} \label{half}
2^r \sum_{a=0}^r S(r,a) \frac{1}{2^a}. \end{equation} It would be interesting to see this from the current perspective. Despite the
appearance of $S(r,a)$ in both \eqref{half} and Theorem \ref{explicit}, this does not seem completely straightforward.

{\bf Example 4:} Suppose that $a,b > 0$. Then Diaconis, Evans, and Graham \cite{DEG} show by Stein's method, that there exists
an (inexplicit) constant $K$ depending on $a$ and $b$ but not on $n$, such that if $g$ is uniformly distributed on $S_n$ and $x$ has at most
$a$ fixed points and at most $b$ 2-cycles, then the total variation distance between the distribution of the number of fixed points
of $g^{-1}x^{-1}gx$ and a Poisson$(1)$ distribution is at most $K/n$. It would be very interesting to prove this using moment generating
functions and the current perspective.

\section{Fixed points and the random i-cycle walk} \label{transposwalk}

This section studies the distribution of the number of fixed points after multiplying $\frac{1}{i} n \log(n) + cn$ many i-cycles, where $c$ is a fixed real number
and $n \rightarrow \infty$ and $i$ is fixed.

Lemma \ref{Fourier} is well known; see for instance \cite{Dgroup} or Exercise 7.67 of \cite{Stan}. 

\begin{lemma} \label{Fourier} The chance of obtaining a permutation $g$ after multiplying $k$ random i-cycles is equal to
\[ \frac{1}{n!} \sum_{|\tau|=n} d_{\tau}^2 \left( \frac{\chi^{\tau}(i,1^{(n-i)})}{d_{\tau}}  \right)^k \frac{\chi^{\tau}(g)}{d_{\tau}},\]
where the sum is over all partitions $\tau$ of $n$, and $\chi^{\tau}(i,1^{(n-i)})$ is the character of $\tau$ on an i-cycle.
\end{lemma}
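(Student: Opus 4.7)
The plan is to recognize the desired probability as the value at $g$ of the $k$-fold convolution of the uniform probability measure $\mu$ supported on the conjugacy class $C$ of $i$-cycles in $S_n$, and then to apply the Fourier inversion (Plancherel) formula on the finite group $S_n$. Recall this formula: for any function $f$ on a finite group $G$,
\[ f(g) = \frac{1}{|G|} \sum_{\rho} d_{\rho} \operatorname{tr}\bigl(\hat{f}(\rho)\, \rho(g^{-1})\bigr), \qquad \hat{f}(\rho) = \sum_{h \in G} f(h)\, \rho(h), \]
where the outer sum is over irreducible representations $\rho$ of $G$.

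The next step is to compute $\hat{\mu}(\rho)$ for $\mu = \frac{1}{|C|} \mathbf{1}_{C}$ and $\rho$ the irreducible representation labeled by $\tau$. By definition,
\[ \hat{\mu}(\rho) = \frac{1}{|C|} \sum_{h \in C} \rho(h). \]
Since $C$ is a full conjugacy class, this matrix commutes with $\rho(y)$ for every $y \in S_n$, so by Schur's lemma it equals a scalar $c\, I$. Taking the trace gives $c \cdot d_\tau = \chi^{\tau}(i,1^{n-i})$, so
\[ \hat{\mu}(\rho) = \frac{\chi^{\tau}(i,1^{n-i})}{d_\tau}\, I. \]
This is the only step requiring any ideas; the rest is bookkeeping.

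Because convolution on $G$ corresponds to matrix multiplication on the Fourier side, $\widehat{\mu^{*k}}(\rho) = \hat{\mu}(\rho)^k = \bigl(\chi^{\tau}(i,1^{n-i})/d_\tau\bigr)^k I$. Plugging this into the inversion formula yields
\[ \mu^{*k}(g) = \frac{1}{n!} \sum_{|\tau|=n} d_\tau \left( \frac{\chi^{\tau}(i,1^{n-i})}{d_\tau} \right)^{k} \chi^{\tau}(g^{-1}). \]
Finally, since all irreducible characters of $S_n$ are real valued, $\chi^{\tau}(g^{-1}) = \chi^{\tau}(g)$, and rewriting $d_\tau = d_\tau^{2}/d_\tau$ produces the form stated in the lemma. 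The main (minor) obstacle is the scalar-matrix identification via Schur's lemma; beyond that, everything is a direct application of Fourier inversion on $S_n$.
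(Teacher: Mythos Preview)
Your argument is correct and is exactly the standard derivation via Fourier inversion on a finite group together with Schur's lemma applied to the class sum. The paper does not actually supply a proof of this lemma; it simply cites it as well known (referring to \cite{Dgroup} and Exercise~7.67 of \cite{Stan}), and the argument in those references is essentially the one you have written.
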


This leads to the following corollary.

\begin{cor} \label{computer} The rth moment of the number of fixed points after multiplying $k$ many i-cycles is equal to
\[ \sum_{|\lambda|=n} d_{\lambda} \left( \frac{\chi^{\lambda}(i,1^{(n-i)})}{d_{\lambda}} \right)^k m_{\lambda,r}.\]
\end{cor}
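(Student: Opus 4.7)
The plan is to combine Lemma \ref{Fourier} with the fact that the number of fixed points is the character of the defining representation, so that its $r$th power is the character of the $r$th tensor power, whose decomposition is given by the multiplicities $m_{\lambda,r}$.

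First I would write the $r$th moment as
\[ E[F^r] = \sum_{g \in S_n} P(g) F(g)^r, \]
where $F(g)$ is the number of fixed points of $g$ and $P(g)$ is the probability of obtaining $g$ after multiplying $k$ random $i$-cycles. Since $F$ is the character of the $n$-dimensional defining representation, $F^r$ is the character of its $r$th tensor power, and by the definition of $m_{\lambda,r}$ we have
\[ F(g)^r = \sum_{|\lambda|=n} m_{\lambda,r}\, \chi^{\lambda}(g). \]

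Next I would substitute the expression for $P(g)$ from Lemma \ref{Fourier} and interchange the order of summation, yielding
\[ E[F^r] = \sum_{|\lambda|=n} \sum_{|\tau|=n} m_{\lambda,r}\, d_{\tau}^{2}\left(\frac{\chi^{\tau}(i,1^{(n-i)})}{d_{\tau}}\right)^{k}\frac{1}{d_{\tau}} \cdot \frac{1}{n!}\sum_{g \in S_n} \chi^{\tau}(g)\,\chi^{\lambda}(g). \]
By the orthogonality relation for irreducible characters of $S_n$ (recalling from the introduction that all such characters are real-valued, so no complex conjugation is needed), the inner sum collapses to $\delta_{\tau,\lambda}$. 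Collapsing the double sum over $\tau,\lambda$ to a single sum over $\lambda$ and simplifying $d_{\lambda}^{2}/d_{\lambda} = d_{\lambda}$ gives exactly the claimed expression.

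There is no real obstacle here: the only potentially subtle step is the swap of summations, which is finite, and the invocation of character orthogonality, which is standard. The corollary is essentially a one-line consequence of Lemma \ref{Fourier} once one recognizes $F^r$ as the character of the $r$th tensor power of the defining representation.
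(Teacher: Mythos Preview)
Your proposal is correct and follows essentially the same approach as the paper's own proof: expand $F(g)^r$ via the multiplicities $m_{\lambda,r}$, substitute the Fourier expression for the probability from Lemma~\ref{Fourier}, and collapse using character orthogonality. The paper's argument is identical in structure and length.
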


\begin{proof} From Lemma \ref{Fourier}, the rth moment is equal to
\begin{equation} \label{complic} \sum_{g \in S_n}  \frac{1}{n!} \sum_{|\tau|=n} d_{\tau}^2 \left( \frac{\chi^{\tau}(i,1^{(n-i)})}{d_{\tau}}  \right)^k \frac{\chi^{\tau}(g)}{d_{\tau}} \sum_{|\lambda|=n} m_{\lambda,r} \chi^{\lambda}(g). \end{equation} By the orthogonality relation for irreducible characters,
\[ \frac{1}{n!} \sum_{g \in S_n} \chi^{\tau}(g) \chi^{\lambda}(g) = 1 \] if $\tau = \lambda$ and $0$ else. Thus \eqref{complic} simplifies to
\[ \sum_{|\lambda|=n} d_{\lambda} \left( \frac{\chi^{\lambda}(i,1^{(n-i)})}{d_{\lambda}} \right)^k m_{\lambda,r}.\]
\end{proof}

Lemma \ref{Jimmy} is due to Jimmy He and is included with his permission. It is a modification of Theorem 5 of Hough \cite{Ho}.
We had conjectured this lemma in an earlier version of the paper, but could only prove it for $i \leq 7$ using results from \cite{CGS}.
It would be interesting to prove Lemma \ref{Jimmy} without using complex analysis; perhaps the approaches of \cite{CGS} or \cite{IO}
will enable one to do this, but there are obstacles.
 
\begin{lemma} \label{Jimmy}
Fix natural numbers $t,i$, with $i\geq 2$. Let $\lambda$ be a partition of $n$ with $\lambda_1=n-t$. Then
\begin{equation*}
    \frac{\chi^\lambda(i,1^{(n-i)})}{d_\lambda}=1-\frac{it}{n}+O\left(\frac{1}{n^2}\right),
\end{equation*}
where the constant depends only on $t$ and $i$.
\end{lemma}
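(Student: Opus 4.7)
The approach I would take is the Murnaghan-Nakayama rule. Writing $\lambda = (n-t, \bar\lambda)$ with $|\bar\lambda| = t$,
\[ \chi^\lambda(i,1^{n-i}) = \sum_R (-1)^{\mathrm{ht}(R)} d_{\lambda - R}, \]
where $R$ ranges over rim hooks of size $i$ in $\lambda$. The first step is to show that for $n > 2t+i$, these rim hooks split cleanly into two families: a unique ``main'' rim hook $R_0$ occupying the last $i$ cells of row $1$ (giving $\lambda - R_0 = (n-t-i, \bar\lambda)$ with height $0$), and rim hooks lying entirely in rows $\geq 2$. The only outer corner of $\lambda$ in row $1$ is $(1, n-t)$; any ribbon of size $i$ ending there that descends into row $2$ would have to cross at some column $c \leq \bar\lambda_1 \leq t$ and would then contain at least $n - t - c + 1 \geq n - 2t + 1 > i$ cells in row $1$, a contradiction. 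Every other outer corner of $\lambda$ lies in rows $\geq 2$, and ribbons ending at such a corner stay weakly below it in both coordinates, so they remain in rows $\geq 2$.

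The heart of the argument is computing $d_{\lambda - R_0}/d_\lambda$. Using beta numbers $\mu_j = \lambda_j + \ell - j$, where $\ell$ is the number of parts of $\lambda$, one has $\mu_1 = n - t + \ell - 1 \sim n$ while $\mu_2, \ldots, \mu_\ell$ are bounded by a function of $t$. The Frobenius determinantal formula $d_\nu = |\nu|! \prod_{j<k}(\mu_j - \mu_k) / \prod_j \mu_j!$ yields
\[ \frac{d_{\lambda - R_0}}{d_\lambda} = \frac{\prod_{s=0}^{i-1}(\mu_1 - s)}{\prod_{s=0}^{i-1}(n - s)} \cdot \prod_{j=2}^{\ell}\frac{\mu_1 - i - \mu_j}{\mu_1 - \mu_j}. \]
A careful expansion in $1/n$ shows the first factor equals $1 - i(t - \ell + 1)/n + O(1/n^2)$ and the second equals $1 - i(\ell - 1)/n + O(1/n^2)$; the $\ell$-dependence cancels and they multiply to $1 - it/n + O(1/n^2)$, with implied constant depending only on $t$ and $i$.

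For the remaining rim hooks $R$, which lie entirely in rows $\geq 2$ and therefore correspond bijectively to rim hooks of size $i$ in $\bar\lambda$ (vacuous unless $i \leq t$), the partition $\lambda - R = (n-t, \bar\lambda - R)$ has $|\bar\lambda - R| = t - i$. By the hook length formula, after extracting the falling factorial from the first row one sees $d_\lambda = \Theta(n^t)$ and $d_{\lambda - R} = \Theta(n^{t-i})$, with constants depending only on $\bar\lambda$ and $R$. Hence $d_{\lambda - R}/d_\lambda = O(n^{-i}) = O(n^{-2})$ since $i \geq 2$. The number of such $R$ is bounded in terms of $t$ (only finitely many $\bar\lambda$ of size $\leq t$, each with finitely many size-$i$ rim hooks), so their total contribution to the character ratio is $O(1/n^2)$, establishing the lemma.

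The main obstacle is the $1/n$-expansion in the main term: one must expand both the ratio of falling factorials and the Vandermonde-style product to first order and verify the cancellation of $\ell$ in the coefficient of $1/n$. The fact that the final answer depends on $\lambda$ only through $t$ is exactly this cancellation and serves as a useful consistency check.
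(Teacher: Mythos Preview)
Your proof is correct and takes a genuinely different route from the paper. The paper proceeds via Frobenius's contour-integral formula for the character ratio on an $i$-cycle: the integrand has one pole far from the origin (at $a_1-\frac{i-1}{2}$, with $a_1=n-t-1$ the first Frobenius coordinate) whose residue, after a three-factor expansion, yields the main term $1-\frac{it}{n}+O(1/n^2)$; the remaining poles lie inside a circle of bounded radius and contribute $O(n^{-i})$. Your Murnaghan--Nakayama argument mirrors this structure combinatorially --- the ribbon in row~$1$ plays the role of the large-pole residue, and ribbons confined to $\bar\lambda$ correspond to the small poles --- but avoids complex analysis entirely. The paper in fact remarks that a proof without complex analysis would be desirable, so you have supplied exactly that. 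What the contour-integral approach buys is a direct connection to Hough's existing machinery; what your approach buys is elementarity and a transparent explanation, via the beta-number computation, of why the $\ell$-dependence cancels so that the answer depends on $\lambda$ only through~$t$.
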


\begin{proof} The proof will use some notation. If $\lambda$ is a partition, the \emph{diagonal size} $m$ of $\lambda$ is the number of boxes on the diagonal, i.e. the largest $k$ such that $\lambda_k\geq k$. The \emph{Frobenius coordinates} $(a_1,\dotsc, a_m|b_1,\dots, b_m)$ of $\lambda$ are given by $a_j=\lambda_j-j$ and $b_j=\lambda'_j-j$. We let $x^{\underline{k}}=x(x-1) \cdots (x-k+1)$ denote the falling factorial.

 A formula of Frobenius (see page 52 of \cite{FulHar}) gives the value of the character ratio on an $i$-cycle as
\begin{equation*}
    \frac{\chi^\lambda(i,1^{(n-i)})}{d_\lambda}=-\frac{1}{in^{\underline{i}}}\oint F^{a,b}_i(z)\frac{dz}{2\pi \sqrt{-1}},
\end{equation*}
where $(a|b)$ are the Frobenius coordinates of $\lambda$, $m$ is the diagonal size of $\lambda$, and
\begin{equation*}
    F^{a,b}_i(z)=\left(z+\frac{i-1}{2}\right)^{\underline{i}}\prod_{j=1}^m \frac{z-a_j-\frac{i+1}{2}}{z-a_j+\frac{i-1}{2}}\prod_{j=1}^m\frac{z+b_j+\frac{i+1}{2}}{z+b_j-\frac{i-1}{2}}.
\end{equation*}
Here, the contour goes counterclockwise around all poles of the integrand. 

The key idea to prove the lemma, which already appeared in \cite{Ho}, is that the dominant contribution to the asymptotics comes from the residue of the pole at $a_1-\frac{i-1}{2}$, which is far from the origin. The remaining poles are all of constant order distance from the origin, and we can estimate their combined contribution.

 First, note that as $a_1=n-t-1$, and the remaining $a_j$ and $b_j$ are at most $t$, and so $O(1)$, for large enough $n$ we can choose a large constant $R$ such that $C_R$, the circle of radius $C$ centered at $0$, contains all poles except $a_1-\frac{i-1}{2}$. We can then write
\begin{equation}
\label{eq: res+int}
    \frac{\chi^\lambda(i,1^{(n-i)})}{d_\lambda}=-\frac{1}{in^{\underline{i}}}\mathrm{res}_{z=a_1-\frac{i-1}{2}}F_i^{a,b}(z)-\frac{1}{in^{\underline{i}}}\oint_{C_R} F^{a,b}_i(z)\frac{dz}{2\pi \sqrt{-1}}.
\end{equation}

Now the residue at $z=a_1-\frac{i-1}{2}$ was already computed in \cite{Ho}, and is easily seen to be
\begin{equation*}
    \mathrm{res}_{z=a_1-\frac{i-1}{2}}F_i^{a,b}(z)=-i(n-t-1)^{\underline{i}}\prod_{j=2}^m\frac{n-t-a_j-i-1}{n-t-a_j-1}\prod_{j=1}^m\frac{n-t+b_j}{n-t+b_j-i}.
\end{equation*}
and so the first term in \eqref{eq: res+int} is equal to
\begin{equation*}
    \frac{(n-t-1)^{\underline{i}}}{n^{\underline{i}}}\prod_{j=2}^m\frac{n-t-a_j-i-1}{n-t-a_j-1}\prod_{j=1}^m\frac{n-t+b_j}{n-t+b_j-i}.
\end{equation*}
We claim that this is $1-\frac{it}{n}+O\left(\frac{1}{n^2}\right)$. Indeed, the first factor is
\begin{equation*}
    \left(1-\frac{t+1}{n}\right)\dotsm \left(1-\frac{t+1}{n-i+1}\right)=1-\frac{i(t+1)}{n}+O\left(\frac{1}{n^2}\right),
\end{equation*}
the second factor is
\begin{equation*}
    \prod_{j=2}^m\left(1-\frac{i}{n-t-a_j-1}\right)=1-\frac{i(m-1)}{n}+O\left(\frac{1}{n^2}\right),
\end{equation*}
and the third factor is
\begin{equation*}
    \prod_{j=1}^m\left(1+\frac{i}{n-t+b_j}\right)=1+\frac{im}{n}+O\left(\frac{1}{n^2}\right).
\end{equation*}
Here, it's easy to see that all implied constants depend only on $i$ and $t$, as in particular $m\leq t$ and all $a_j$ and $b_j$ (except $a_1$) are also bounded by $t$. Thus, the first term in \eqref{eq: res+int} is $1-\frac{it}{n}+O\left(\frac{1}{n^2}\right)$.

It remains to bound the second term (the contour integral) in \eqref{eq: res+int}. To do so, we simply note that if we pick $R$ large enough, then $F_i^{a,b}(z)$ is of constant order along the contour $C_R$. Indeed, if say $t+i+1<R<\sqrt{n}$, then all denominators are at least $1$, all numerators except the one containing $a_1$ are bounded by $2R$, and $\left|\frac{z-a_1-\frac{i+1}{2}}{z-a_1+\frac{i-1}{2}}\right|=1+o(1)$, and so $|F_i^{a,b}(z)|\leq (2R)^{i+2m-1}(1+o(1))=O(1)$ with dependence only on $i$ and $t$, since $m\leq t$ and $R$ can be chosen to only depend on these two quantities. Since the contour is also of constant length, the contour integral is of constant order, and so the second term in \eqref{eq: res+int} is $O(n^{-i})$.
\end{proof}

Lemma \ref{intasy} is important. Recall that $\bar{\lambda}$ denotes the partition obtained from $\lambda$ by removing its largest part.

\begin{lemma} \label{intasy} If $\lambda$ is any partition satisfying $\lambda_1=n-t$ where $t$ is fixed, then for fixed $c$,
\[ d_{\lambda} \left( \frac{\chi^{\lambda}(i,1^{(n-i)})}{d_{\lambda}} \right)^{\frac{1}{i} n \log(n) +cn} \rightarrow
\frac{e^{-itc} d_{\bar{\lambda}}}{t!} \] as $n \rightarrow \infty$. \end{lemma}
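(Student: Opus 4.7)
The plan is to combine Lemma \ref{Jimmy} with a hook-length asymptotic for $d_\lambda$. I will show the two factorizations
\[
d_\lambda = \frac{n^t d_{\bar{\lambda}}}{t!}\bigl(1+O(1/n)\bigr) \quad\text{and}\quad \left(\frac{\chi^\lambda(i,1^{(n-i)})}{d_\lambda}\right)^{\frac{1}{i}n\log n + cn} = n^{-t} e^{-itc}(1+o(1)),
\]
whose product is exactly the claimed limit since the $n^t$ and $n^{-t}$ cancel.

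For the dimension estimate, I would use the hook length formula $d_\lambda = n!/\prod_c h_\lambda(c)$. The key observation is that for any cell $(k,j)$ with $k \geq 2$, its hook length in $\lambda$ equals its hook length in $\bar{\lambda}$, since removing the first row of $\lambda$ changes neither the arm nor the leg of such a cell. Hence the contribution from rows $k\geq 2$ to the hook product is exactly $t!/d_{\bar{\lambda}}$. Among first-row cells, those with column index $j > \lambda_2$ have empty legs and contribute $(n-t-\lambda_2)!$, while the $\lambda_2$ remaining first-row cells have hooks $n-t-j+\lambda'_j+1$, each of size $n + O(1)$. Dividing into $n!$ then yields $d_\lambda = (n^t d_{\bar{\lambda}}/t!)(1+O(1/n))$, because $n!/(n-t-\lambda_2)!$ is a product of $t+\lambda_2$ factors each equal to $n+O(1)$, and the $\lambda_2$ first-row hook factors contribute another $n^{\lambda_2}(1+O(1/n))$ in the denominator.

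For the exponential factor, Lemma \ref{Jimmy} gives $\chi^\lambda(i,1^{(n-i)})/d_\lambda = 1-it/n+O(1/n^2)$. Taking logarithms yields $-it/n+O(1/n^2)$, and multiplying by the exponent $\frac{1}{i}n\log n+cn$ produces
\[
-t\log n - itc + O\bigl((\log n)/n\bigr).
\]
Exponentiating gives $n^{-t} e^{-itc}(1+o(1))$. Multiplying the two factorizations yields the stated limit $e^{-itc}d_{\bar{\lambda}}/t!$.

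The main point requiring care is ensuring that the error term $O(1/n^2)$ in Lemma \ref{Jimmy} survives multiplication by the large exponent $\frac{1}{i}n\log n + cn$; this works because the product remains $O((\log n)/n) = o(1)$. The hook-length bookkeeping is routine since $t$ and $\bar{\lambda}$ are fixed as $n\to\infty$, so only boundedly many cells below the first row need to be tracked and all implied constants are uniform in $n$.
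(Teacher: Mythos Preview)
Your proposal is correct and follows essentially the same route as the paper: factor the expression as $(d_\lambda/n^t)\cdot n^t(\chi^\lambda/d_\lambda)^k$, use the hook length formula to show $d_\lambda/n^t\to d_{\bar\lambda}/t!$, and use Lemma~\ref{Jimmy} to show $n^t(\chi^\lambda/d_\lambda)^{\frac{1}{i}n\log n+cn}\to e^{-itc}$. The paper states these two limits tersely, whereas you spell out the hook bookkeeping and the logarithm computation in more detail; aside from a harmless off-by-one in your displayed first-row hook (it should be $n-t-j+\lambda'_j$ rather than $n-t-j+\lambda'_j+1$), the arguments coincide.
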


\begin{proof} Multiplying and dividing by $n^t$ gives \[ \frac{d_{\lambda}}{n^t} n^t  \left( \frac{\chi^{\lambda}(i,1^{(n-i)})}{d_{\lambda}} \right)^{\frac{1}{i} n \log(n) +cn}.\] Now by the hook formula for $d_{\lambda}$ (see Theorem 3.1.2 of \cite{Sagan}), \[ \frac{d_{\lambda}}{n^t} \rightarrow \frac{d_{\bar{\lambda}}}{t!}.\] By Lemma \ref{Jimmy},
\[ \frac{\chi^{\lambda}(i,1^{(n-i)})}{d_{\lambda}} = 1 - \frac{it}{n} + O(1/n^2).\]
Thus \[ n^t  \left( \frac{\chi^{\lambda}(i,1^{(n-i)})}{d_{\lambda}} \right)^{\frac{1}{i} n \log(n) +cn} \rightarrow e^{-itc} \] as $n \rightarrow
\infty$. \end{proof}

Theorem \ref{maintrans} is the main result of this section. For $i=2$ it is due to Matthews \cite{Mat}, who proved it using strong uniform times.
We have not seen it in the literature for $i \geq 3$.

\begin{theorem} \label{maintrans} For $c$ a fixed real number and $i \geq 2$ fixed, the distribution of the number of fixed points after multiplying
$\frac{1}{i} n \log(n) +cn$ random i-cycles converges to a Poisson$(1+e^{-ic})$ limit as $n \rightarrow \infty$.
\end{theorem}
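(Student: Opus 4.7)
The plan is a direct method-of-moments argument, relying on the fact that the Poisson distribution is determined by its moments and that a Poisson$(\l)$ random variable has $r$th moment $\sum_{a=0}^r S(r,a)\l^a$. Combining Corollary \ref{computer} with Theorem \ref{explicit} expresses the $r$th moment of the number of fixed points after $k = \frac{1}{i}n\log(n)+cn$ random $i$-cycles as
\[ \sum_{a=0}^r S(r,a) \sum_{|\lambda|=n} d_{\lambda/(n-a)}\, d_{\lambda} \left( \frac{\chi^{\lambda}(i,1^{(n-i)})}{d_{\lambda}} \right)^{k}. \]
Since $d_{\lambda/(n-a)}$ vanishes unless $\lambda_1 \geq n-a$, only partitions with $\lambda_1 \geq n-r$ contribute, and writing $\lambda_1 = n-t$ the inner sum is indexed by choices of $\bar{\lambda}$ of size $t\le a\le r$. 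In particular this is secretly a finite sum whose index set stabilizes once $n$ is large, removing any worry about interchanging limits.

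The second step is the standard tableau count used in the proof of Theorem \ref{dingsformula}: for $n$ large enough that $\bar{\lambda}_1 \leq t < n-a$, the skew shape $\lambda/(n-a)$ splits as a disconnected horizontal strip of length $a-t$ together with $\bar{\lambda}$, giving $d_{\lambda/(n-a)} = \binom{a}{t} d_{\bar{\lambda}}$. Applying Lemma \ref{intasy} term by term, each surviving summand has limit $e^{-itc} d_{\bar{\lambda}}/t!$, so the $r$th moment converges to
\[ \sum_{a=0}^r S(r,a) \sum_{t=0}^a \binom{a}{t} e^{-itc} \frac{1}{t!} \sum_{|\bar{\lambda}|=t} d_{\bar{\lambda}}^{\,2}. \]

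Finally I would invoke the identity $\sum_{|\mu|=t} d_{\mu}^{\,2} = t!$ (sum of squared dimensions over irreducibles of $S_t$) to collapse the inner sum to $1$, and then the binomial theorem to simplify $\sum_{t=0}^a \binom{a}{t} e^{-itc} = (1+e^{-ic})^a$. What remains is
\[ \sum_{a=0}^r S(r,a) (1+e^{-ic})^a, \]
which is exactly the $r$th moment of Poisson$(1+e^{-ic})$, completing the proof.

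The serious analytic content is hidden in Lemma \ref{intasy}, which in turn rests on Jimmy He's character-ratio estimate (Lemma \ref{Jimmy}); the work sketched above is essentially bookkeeping. If there is a delicate step, it is the small combinatorial miracle by which two a priori unrelated factors — the $\binom{a}{t}$ arising from counting SYT of the skew shape $\lambda/(n-a)$, and the $(e^{-ic})^t$ factor produced by Lemma \ref{intasy} — fit together via the binomial theorem to yield precisely the Poisson parameter $1+e^{-ic}$ predicted by Teyssier's total-variation result for $i=2$.
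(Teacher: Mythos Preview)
Your proposal is correct and follows essentially the same route as the paper: combine Corollary \ref{computer} with Theorem \ref{explicit}, restrict to $\lambda_1=n-t$ with $t\le a$, use $d_{\lambda/(n-a)}=\binom{a}{t}d_{\bar\lambda}$ as in the proof of Theorem \ref{dingsformula}, apply Lemma \ref{intasy} termwise, and finish with $\sum_{|\bar\lambda|=t}d_{\bar\lambda}^{2}=t!$ and the binomial theorem. The only addition beyond the paper is your explicit remark that the index set stabilizes for large $n$, which justifies the termwise limit.
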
  

\begin{proof} By Corollary \ref{computer} and Theorem \ref{explicit}, the rth moment of the number of fixed points after $k$
many i-cycles is equal to 
\[ \sum_{a=0}^r S(r,a) \sum_{|\lambda|=n} d_{\lambda} d_{\lambda/(n-a)} \left( \frac{\chi^{\lambda}(i,1^{(n-i)})}{d_{\lambda}} \right)^k .\]
Since a Poisson($\lambda$) distribution has rth moment $\sum_{a=0}^r S(r,a) \lambda^a$, it is sufficient to show that as $n \rightarrow \infty$,
\begin{equation} \label{eqstar} \sum_{|\lambda|=n} d_{\lambda} d_{\lambda/(n-a)}  \left( \frac{\chi^{\lambda}(i,1^{(n-i)})}{d_{\lambda}} \right)^{\frac{1}{i} n \log(n) +cn} \end{equation} tends to $(1+e^{-ic})^a$. 

Rewrite \eqref{eqstar} as \begin{equation} \label{eqstar2} \sum_{t=0}^a \sum_{|\lambda|=n \atop \lambda_1=n-t} d_{\lambda} d_{\lambda/(n-a)} 
 \left( \frac{\chi^{\lambda}(i,1^{(n-i)})}{d_{\lambda}} \right)^{\frac{1}{i} n \log(n) +cn}.\end{equation}
Arguing as in the proof of Theorem \ref{dingsformula}, for $a$ fixed and large enough $n$, \[ d_{\lambda/(n-a)} = d_{\bar{\lambda}} {a \choose t}.\] 
Thus for fixed $a$ and $n$ large enough, \eqref{eqstar2} becomes
\[ \sum_{t=0}^a {a \choose t} \sum_{|\lambda|=n \atop \lambda_1=n-t} d_{\bar{\lambda}} d_{\lambda}
\left( \frac{\chi^{\lambda}(i,1^{(n-i)})}{d_{\lambda}} \right)^{\frac{1}{i} n \log(n) +cn}.\] By Lemma \ref{intasy}, this tends to \begin{equation} \label{lastone} \sum_{t=0}^a e^{-itc} {a \choose t} \sum_{|\bar{\lambda}|=t} (d_{\bar{\lambda}})^2 / t!. \end{equation} as $n \rightarrow \infty$. Since $\sum_{|\bar{\lambda}|=t} (d_{\bar{\lambda}})^2 = t!$, \eqref{lastone} simplifies to
\[ \sum_{t=0}^a e^{-itc} {a \choose t} = (1+e^{-ic})^a, \] as needed.
\end{proof}

\section{Acknowledgements} The author was supported by Simons Foundation Grant 917224. Persi Diaconis helped with the exposition, Alex Miller
provided a second proof of Theorem 2.1 and made other comments, and Jimmy He proved the crucial Lemma 4.3 which was conjectured in an earlier version of this paper.

\end{document}